\providecommand{\U}[1]{\protect\rule{.1in}{.1in}}
\newtheorem{theorem}{Theorem}
\newtheorem{corollary}[theorem]{Corollary}
\newtheorem{lemma}[theorem]{Lemma}
\newenvironment{proof}[1][Proof]{\noindent\textbf{#1.} }{\ \rule{0.5em}{0.5em}}
\begin{document}

\title{ }

\begin{center}
{\Large Some formulae for products of Fubini polynomials with applications}%
\bigskip

Levent Karg\i n

Akseki Vocational School, Alanya Alaaddin Keykubat University,\newline Antalya
TR-07630, Turkey\newline

\textbf{E-mail: }leventkargin48@gmail.com\bigskip

\textbf{Abstract}
\end{center}

In this paper we evaluate sums and integrals of products of Fubini polynomials
and have new explicit formulas for Fubini polynomials and numbers. As a
consequence of these results new explicit formulas for $p$-Bernoulli numbers
and Apostol-Bernoulli functions are given. Besides, integrals of products of
Apostol-Bernoulli functions are derived.

\textbf{2000 Mathematics Subject Classification: }11B68, 11B75, 11B83.

\textbf{Key words: }Fubini numbers and polynomials, Apostol-Bernoulli
functions, $p$-Bernoulli numbers.

\section{Introduction, Definitions and Notations}

Let $%
\genfrac{\{}{\}}{0pt}{}{n}{k}%
$ be the Stirling numbers of the second kind (\cite{Graham}). Fubini
polynomials are defined by \cite{T}
\begin{equation}
F_{n}\left(  y\right)  =\sum_{k=0}^{n}%
\genfrac{\{}{\}}{0pt}{}{n}{k}%
k!y^{k}. \label{2}%
\end{equation}
They have the exponential generating function
\begin{equation}
\frac{1}{1-y\left(  e^{t}-1\right)  }=\sum_{n=0}^{\infty}F_{n}\left(
y\right)  \frac{t^{n}}{n!}\text{,} \label{10}%
\end{equation}
and are related to the geometric series \cite{B}%
\[
\left(  y\frac{d}{dy}\right)  ^{m}\frac{1}{1-y}=\sum_{k=0}^{\infty}k^{m}%
y^{k}=\frac{1}{1-y}F_{m}\left(  \frac{y}{1-y}\right)  ,\text{ \ }\left\vert
y\right\vert <1.
\]
Because of this relation Fubini polynomials are also called geometric
polynomials. In addition, the following recurrence relation holds for the
Fubini polynomials \cite{Diletal}%
\begin{equation}
F_{n+1}\left(  y\right)  =y\frac{d}{dy}\left[  F_{n}\left(  y\right)
+yF_{n}\left(  y\right)  \right]  . \label{11}%
\end{equation}

The $n$ th Fubini number (ordered Bell number or geometric number) \cite{Da,
Gr, T}, $F_{n},$ is defined by
\begin{equation}
F_{n}\left(  1\right)  =F_{n}=\sum_{k=0}^{n}%
\genfrac{\{}{\}}{0pt}{}{n}{k}%
k!\text{,} \label{14}%
\end{equation}
and counts all the possible set partitions of an $n$ element set such that the
order of the blocks matters. Besides with this combinatorial property, these
numbers are seen in the evaluation of the following series
\begin{equation}
\sum_{k=0}^{\infty}\frac{k^{n}}{2^{k}}=2F_{n}. \label{1}%
\end{equation}

In the literature numerous identities concerned with these polynomials and
numbers were obtained \cite{B, B2, B3, B4, Dil1, MEZO} and their
generalizations are given \cite{B, Dil2, Kargin, Mezo2}. The main purpose of
this paper is to generalize the binomial formulas \cite{Diletal}%
\begin{align}
\sum_{k=0}^{n}\binom{n}{k}F_{k}  &  =2F_{n},\text{ \ }n>0,\label{5}\\
2\sum_{k=0}^{n}\binom{n}{k}\left(  -1\right)  ^{k}F_{k}  &  =\left(
-1\right)  ^{n}F_{n}+1,\text{ \ }n\geq0, \label{6}%
\end{align}
and the integral representation \cite{KELLER}%
\begin{equation}
\int_{-1}^{0}F_{n}\left(  y\right)  dy=B_{n},\text{ \ }n>0. \label{26}%
\end{equation}
Here $B_{n}$ is the Bernoulli numbers defined by the explicit formula
\begin{equation}
B_{n}=\sum_{k=0}^{n}%
\genfrac{\{}{\}}{0pt}{}{n}{k}%
\left(  -1\right)  ^{k}\frac{k!}{k+1}. \label{32}%
\end{equation}
As applications of these generalizations we obtain explicit formulas for
Apostol-Bernoulli functions and $p$-Bernoulli numbers and integrals of
products of Apostol-Bernoulli functions. We use generating function technique
in the proofs.

Now we state our results.

\section{Sums of products of Fubini polynomials}

In this section we define two variable Fubini polynomials and obtain some
basic properties which give us new formulas for $F_{n}\left(  y\right)  .$
Moreover we shall consider the sums of products of two Fubini polynomials. The
sums of products of various polynomials and numbers with or without binomial
coefficients have been studied (e.g., \cite{Agoh, Kamano, Komatsu, Komatsu2,
Singh, Zhao}).

Two variable Fubini polynomials are defined by means of the following
generating function%
\begin{equation}
\sum_{n=0}^{\infty}F_{n}\left(  x;y\right)  \frac{t^{n}}{n!}=\frac{e^{xt}%
}{1-y\left(  e^{t}-1\right)  }. \label{8}%
\end{equation}
For some special cases of (\ref{8}), we have%
\begin{equation}
F_{n}\left(  0;y\right)  =F_{n}\left(  y\right)  \text{ and }F_{n}\left(
0;1\right)  =F_{n}. \label{29}%
\end{equation}

We can rewrite (\ref{8}) as
\begin{align*}
\sum_{n=0}^{\infty}F_{n}\left(  x;y\right)  \frac{t^{n}}{n!}  &  =\frac
{1}{1-y\left(  e^{t}-1\right)  }e^{xt}\\
&  =\sum_{n=0}^{\infty}F_{n}\left(  y\right)  \frac{t^{n}}{n!}\sum
_{n=0}^{\infty}x^{n}\frac{t^{n}}{n!}\\
&  =\sum_{n=0}^{\infty}\left[  \sum_{k=0}^{n}\binom{n}{k}F_{k}\left(
y\right)  x^{n-k}\right]  \frac{t^{n}}{n!}.
\end{align*}
Comparing the coefficients of $\frac{t^{n}}{n!}$ yields%
\begin{equation}
F_{n}\left(  x;y\right)  =\sum_{k=0}^{n}\binom{n}{k}F_{k}\left(  y\right)
x^{n-k}. \label{3}%
\end{equation}

From (\ref{8}) we have
\begin{align*}
\sum_{n=0}^{\infty}\left[  F_{n}\left(  x+1;y\right)  -F_{n}\left(
x;y\right)  \right]  \frac{t^{n}}{n!}  &  =\frac{e^{xt}\left(  e^{t}-1\right)
}{1-y\left(  e^{t}-1\right)  }\\
&  =\frac{1}{y}\left[  \frac{e^{xt}}{1-y\left(  e^{t}-1\right)  }%
-e^{xt}\right] \\
&  =\frac{1}{y}\sum_{n=0}^{\infty}\left[  F_{n}\left(  x;y\right)
-x^{n}\right]  \frac{t^{n}}{n!}.
\end{align*}
Comparing the coefficients of $\frac{t^{n}}{n!}$ gives
\begin{equation}
yF_{n}\left(  x+1;y\right)  =\left(  1+y\right)  F_{n}\left(  x;y\right)
-x^{n}. \label{4}%
\end{equation}
Thus, setting $x=0$ and $x=-1$ in (\ref{4}) we find
\begin{align}
yF_{n}\left(  1;y\right)   &  =\left(  1+y\right)  F_{n}\left(  y\right)
,\text{ \ }n>0,\label{7}\\
\left(  1+y\right)  F_{n}\left(  -1;y\right)   &  =yF_{n}\left(  y\right)
+\left(  -1\right)  ^{n},\text{ \ }n\geq0, \label{9}%
\end{align}
respectively. Combining these relations with (\ref{3}) gives the equations
(\ref{5}) and (\ref{6}) which were obtained by using Euler-Siedel matrix
method in \cite{Diletal}.

Now, we want to give the generalization of the binomial formula (\ref{5}).
Derivative of (\ref{8}) can be written as%
\[
\frac{\partial}{\partial t}\left(  \frac{e^{xt}}{1-y\left(  e^{t}-1\right)
}\right)  =\frac{xe^{xt}}{1-y\left(  e^{t}-1\right)  }+\frac{ye^{t}%
}{1-y\left(  e^{t}-1\right)  }\frac{e^{xt}}{1-y\left(  e^{t}-1\right)  }.
\]
Taking $x=x_{1}+x_{2}-1$ leads%
\begin{align*}
\frac{\partial}{\partial t}\left(  \frac{e^{xt}}{1-y\left(  e^{t}-1\right)
}\right)   &  =\sum_{n=0}^{\infty}F_{n+1}\left(  x_{1}+x_{2}-1;y\right)
\frac{t^{n}}{n!},\\
\frac{xe^{xt}}{1-y\left(  e^{t}-1\right)  }  &  =\left(  x_{1}+x_{2}-1\right)
\sum_{n=0}^{\infty}F_{n}\left(  x_{1}+x_{2}-1;y\right)  \frac{t^{n}}{n!}%
\end{align*}
and
\begin{align*}
\frac{ye^{t}}{1-y\left(  e^{t}-1\right)  }\frac{e^{xt}}{1-y\left(
e^{t}-1\right)  }  &  =y\left(  \sum_{n=0}^{\infty}F_{n}\left(  x_{1}%
;y\right)  \frac{t^{n}}{n!}\right)  \left(  \sum_{n=0}^{\infty}F_{n}\left(
x_{2};y\right)  \frac{t^{n}}{n!}\right) \\
&  =y\sum_{n=0}^{\infty}\sum_{k=0}^{n}\binom{n}{k}F_{k}\left(  x_{1};y\right)
F_{n-k}\left(  x_{2};y\right)  \frac{t^{n}}{n!}.
\end{align*}
By equating the coefficients of $\frac{t^{n}}{n!},$ we get
\[
y\sum_{k=0}^{n}\binom{n}{k}F_{k}\left(  x_{1};y\right)  F_{n-k}\left(
x_{2};y\right)  =F_{n+1}\left(  x_{1}+x_{2}-1;y\right)  -\left(  x_{1}%
+x_{2}-1\right)  F_{n}\left(  x_{1}+x_{2}-1;y\right)  .
\]
For $x_{1}=x_{2}=0$ in the above equation, using (\ref{9}) give the sums of
products of the Fubini polynomials.

\begin{theorem}
\label{teo1}For $n\geq0,$%
\begin{equation}
\left(  y+1\right)  \sum_{k=0}^{n}\binom{n}{k}F_{k}\left(  y\right)
F_{n-k}\left(  y\right)  =F_{n+1}\left(  y\right)  +F_{n}\left(  y\right)  .
\label{13}%
\end{equation}
When $y=1$ this becomes
\begin{equation}
2\sum_{k=0}^{n}\binom{n}{k}F_{k}F_{n-k}=F_{n+1}+F_{n}. \label{12}%
\end{equation}

\end{theorem}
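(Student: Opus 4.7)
The derivation leading up to the theorem statement has already done most of the heavy lifting: the identity
\[
y\sum_{k=0}^{n}\binom{n}{k}F_{k}(x_{1};y)F_{n-k}(x_{2};y) = F_{n+1}(x_{1}+x_{2}-1;y) - (x_{1}+x_{2}-1)F_{n}(x_{1}+x_{2}-1;y)
\]
has been obtained by differentiating the generating function (\ref{8}) and matching coefficients of $t^n/n!$. So my plan is simply to specialize this identity correctly and then convert everything to single-variable Fubini polynomials.

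First I would set $x_{1}=x_{2}=0$. Since $F_k(0;y)=F_k(y)$ by (\ref{29}) and $x_1+x_2-1=-1$, the identity above collapses to
\[
y\sum_{k=0}^{n}\binom{n}{k}F_{k}(y)F_{n-k}(y) = F_{n+1}(-1;y) + F_{n}(-1;y).
\]
The main task is then to eliminate the shifted values $F_n(-1;y)$ in favour of $F_n(y)$. For this I would invoke (\ref{9}), which gives $(1+y)F_{m}(-1;y)=yF_{m}(y)+(-1)^{m}$ for $m\geq 0$. Applied to $m=n$ and $m=n+1$ and added, the right-hand side above becomes
\[
\frac{y\bigl[F_{n+1}(y)+F_{n}(y)\bigr] + (-1)^{n+1} + (-1)^{n}}{1+y}.
\]

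The small observation that makes everything work cleanly is that $(-1)^{n+1}+(-1)^{n}=0$, so the constant terms cancel and a factor of $y$ can be cleared from both sides, yielding (\ref{13}). The only step that requires any care is this cancellation together with the bookkeeping of signs; otherwise the argument is a direct specialization. Finally, (\ref{12}) follows by putting $y=1$ in (\ref{13}) and using $F_n(1)=F_n$ from (\ref{14}), so there is nothing further to prove.
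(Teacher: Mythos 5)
Your proposal is correct and follows exactly the paper's route: the paper likewise specializes the convolution identity at $x_{1}=x_{2}=0$ and then invokes (\ref{9}) to replace $F_{n+1}(-1;y)+F_{n}(-1;y)$, with the sign cancellation $(-1)^{n+1}+(-1)^{n}=0$ doing the same work you describe. You have simply written out the details the paper leaves implicit.
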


Now, we investigate the sums of products of the Fubini polynomials for
different $y$ values in the following theorem.

\begin{theorem}
For $n\geq0$ and $y_{1}\neq y_{2}$,%
\begin{equation}
\sum_{k=0}^{n}\binom{n}{k}F_{k}\left(  y_{1}\right)  F_{n-k}\left(
y_{2}\right)  =\frac{y_{2}F_{n}\left(  y_{2}\right)  -y_{1}F_{n}\left(
y_{1}\right)  }{y_{2}-y_{1}}. \label{23}%
\end{equation}
\bigskip
\end{theorem}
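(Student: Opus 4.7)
The plan is to prove the identity at the level of exponential generating functions and read off the coefficient of $t^{n}/n!$. The left-hand side is manifestly the Cauchy product
\[
\sum_{n=0}^{\infty}\left[\sum_{k=0}^{n}\binom{n}{k}F_{k}(y_{1})F_{n-k}(y_{2})\right]\frac{t^{n}}{n!}=\left(\sum_{n=0}^{\infty}F_{n}(y_{1})\frac{t^{n}}{n!}\right)\left(\sum_{n=0}^{\infty}F_{n}(y_{2})\frac{t^{n}}{n!}\right),
\]
which by (\ref{10}) equals $\bigl[(1-y_{1}(e^{t}-1))(1-y_{2}(e^{t}-1))\bigr]^{-1}$. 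So the whole game reduces to rewriting this product in a form where each factor is again a Fubini generating function.

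The natural tool is partial fractions in the auxiliary variable $u=e^{t}-1$. Since $y_{1}\neq y_{2}$, I would decompose
\[
\frac{1}{(1-y_{1}u)(1-y_{2}u)}=\frac{1}{y_{2}-y_{1}}\left(\frac{y_{2}}{1-y_{2}u}-\frac{y_{1}}{1-y_{1}u}\right),
\]
a one-line computation (clear denominators and match constants). Substituting back $u=e^{t}-1$ gives
\[
\frac{1}{(1-y_{1}(e^{t}-1))(1-y_{2}(e^{t}-1))}=\frac{1}{y_{2}-y_{1}}\left(\frac{y_{2}}{1-y_{2}(e^{t}-1)}-\frac{y_{1}}{1-y_{1}(e^{t}-1)}\right).
\]

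Applying (\ref{10}) to each of the two fractions on the right turns that expression into
\[
\sum_{n=0}^{\infty}\frac{y_{2}F_{n}(y_{2})-y_{1}F_{n}(y_{1})}{y_{2}-y_{1}}\,\frac{t^{n}}{n!}.
\]
Comparing the coefficients of $t^{n}/n!$ on the two sides of the resulting generating-function identity yields (\ref{23}).

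I do not expect a genuine obstacle here: the argument is routine once one chooses to resolve the product of generating functions by partial fractions, and the hypothesis $y_{1}\neq y_{2}$ enters exactly to make the decomposition legitimate. The only mild care needed is in bookkeeping the signs so that the numerator comes out as $y_{2}F_{n}(y_{2})-y_{1}F_{n}(y_{1})$ rather than its negative, and in noting that the identity is an identity of formal power series in $t$, so no convergence issues arise.
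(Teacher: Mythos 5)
Your proposal is correct and follows essentially the same route as the paper: the paper also resolves the product $\bigl[(1-y_{1}(e^{t}-1))(1-y_{2}(e^{t}-1))\bigr]^{-1}$ by exactly this partial fraction decomposition (stated there for the two-variable polynomials $F_{n}(x;y)$ with factors $e^{x_{1}t}$, $e^{x_{2}t}$, then specialized at $x_{1}=x_{2}=0$) and compares coefficients of $t^{n}/n!$. The only difference is that you work directly with the one-variable case, which changes nothing of substance.
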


\begin{proof}
The products of (\ref{8}) can be written as%
\begin{align}
&  \frac{e^{x_{1}t}}{\left(  1-y_{1}\left(  e^{t}-1\right)  \right)  }%
\frac{e^{x_{2}t}}{\left(  1-y_{2}\left(  e^{t}-1\right)  \right)  }%
\label{69}\\
&  \qquad\qquad=\frac{y_{2}}{y_{2}-y_{1}}\frac{e^{\left(  x_{1}+x_{2}\right)
t}}{1-y_{2}\left(  e^{t}-1\right)  }-\frac{y_{1}}{y_{2}-y_{1}}\frac{e^{\left(
x_{1}+x_{2}\right)  t}}{1-y_{1}\left(  e^{t}-1\right)  }.\nonumber
\end{align}
Using the same method as in the proof of Theorem \ref{teo1} we have{}%
\[
\sum_{k=0}^{n}\binom{n}{k}F_{k}\left(  x_{1};y_{1}\right)  F_{n-k}\left(
x_{2};y_{2}\right)  =\frac{y_{2}F_{n}\left(  x_{1}+x_{2};y_{2}\right)
-y_{1}F_{n}\left(  x_{1}+x_{2};y_{1}\right)  }{y_{2}-y_{1}}.
\]
For $x_{1}=x_{2}=0$ in the above equation gives the desired equation.
\end{proof}

As we know, for $y=1$ Fubini polynomials reduce to Fubini numbers. We now
point out (see \eqref{15}) that Fubini numbers arise for other value of $y$,
too. If we take $y-1$ in place of $y$ in (\ref{8}) we have
\begin{equation}
F_{n}\left(  x;y-1\right)  =\left(  -1\right)  ^{n}F_{n}\left(  1-x;-y\right)
. \label{18}%
\end{equation}
Setting $x=0$ in the above equation and using the relation (\ref{7}) we have
the reflection formula%

\begin{equation}
F_{n}\left(  y\right)  =\left(  -1\right)  ^{n}\frac{y}{y+1}F_{n}\left(
-y-1\right)  ,\text{ \ }n>0. \label{19}%
\end{equation}
Therefore, using (\ref{2}) gives a new explicit formula for Fubini polynomials
in the following theorem.

\begin{theorem}
\label{teo4}For $n>0$ we obtain
\begin{equation}
F_{n}\left(  y\right)  =y\sum_{k=1}^{n}%
\genfrac{\{}{\}}{0pt}{}{n}{k}%
\left(  -1\right)  ^{n+k}k!\left(  y+1\right)  ^{k-1}. \label{21}%
\end{equation}

\end{theorem}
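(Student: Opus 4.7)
The plan is to derive the claimed identity directly from the reflection formula~(\ref{19}) together with the defining expansion~(\ref{2}); both are already in hand, so the proof is essentially a substitution.

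First I would apply~(\ref{2}) with $y$ replaced by $-y-1$ to get
\[
F_{n}(-y-1)=\sum_{k=0}^{n}\genfrac{\{}{\}}{0pt}{}{n}{k}k!(-y-1)^{k}
=\sum_{k=0}^{n}\genfrac{\{}{\}}{0pt}{}{n}{k}k!(-1)^{k}(y+1)^{k}.
\]
For the range $n>0$ that the theorem concerns, the $k=0$ term drops out because $\genfrac{\{}{\}}{0pt}{}{n}{0}=0$, so the sum starts at $k=1$.

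Next I would insert this expansion into the reflection formula~(\ref{19}),
\[
F_{n}(y)=(-1)^{n}\frac{y}{y+1}F_{n}(-y-1),
\]
pull the factor $y/(y+1)$ inside the sum, cancel one power of $(y+1)$ against the denominator (legitimate because $k\geq 1$ in every surviving term), and combine the signs $(-1)^{n}(-1)^{k}=(-1)^{n+k}$. This yields exactly~(\ref{21}).

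There is no real obstacle: both ingredients have been established just above the theorem, and the manipulation is a one-line substitution. The only point worth a small sanity check is that replacing the lower summation index $0$ by $1$ is justified, which, as noted, follows from $\genfrac{\{}{\}}{0pt}{}{n}{0}=0$ for $n>0$; this is also what makes the division by $y+1$ harmless at the formal-polynomial level.
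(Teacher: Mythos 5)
Your proposal is correct and follows exactly the paper's route: the author derives the reflection formula~(\ref{19}) immediately before the theorem and then states that ``using~(\ref{2})'' gives~(\ref{21}), which is precisely the substitution you carry out. Your added remark about the $k=0$ term vanishing (so that the division by $y+1$ is harmless) is a sound and welcome detail that the paper leaves implicit.
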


Note that, when $y=1,$ (\ref{21}) reduce to \cite[Thereom 4.2]{GuoandQi}.
Moreover, from (\ref{19}) we get two conclusion as%
\begin{equation}
F_{2k}\left(  \frac{-1}{2}\right)  =0\text{ and }F_{n}\left(  -2\right)
=\left(  -1\right)  ^{n}2F_{n}. \label{15}%
\end{equation}
Thus, if we take $y_{1}=-2$ and $y_{2}=1$ in (\ref{23}) and use the second
part of (\ref{15}), we obtain the alternating sums of products of Fubini numbers.

\begin{corollary}
For $n>0,$ we have%
\begin{equation}
\sum_{k=0}^{n}\binom{n}{k}\left(  -1\right)  ^{k}F_{k}F_{n-k}=\left\{
\begin{array}
[c]{cc}%
0 & ;n\text{ is odd}\\
\frac{4}{3}F_{n} & ;n\text{ is even}%
\end{array}
.\right.  \label{17}%
\end{equation}

\end{corollary}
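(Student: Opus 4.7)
The plan is exactly the one signalled in the paragraph preceding the corollary: substitute $y_1=-2$ and $y_2=1$ into the identity \eqref{23} and simplify using the reflection consequence \eqref{15}. Writing out \eqref{23} for these parameters gives
\[
\sum_{k=0}^{n}\binom{n}{k}F_{k}(-2)\,F_{n-k}
=\frac{F_{n}(1)-(-2)F_{n}(-2)}{1-(-2)}
=\frac{F_{n}+2F_{n}(-2)}{3}.
\]
On the right-hand side I can apply the second half of \eqref{15} directly, since $n>0$, to replace $F_{n}(-2)$ by $2(-1)^{n}F_{n}$, obtaining $\bigl(1+4(-1)^{n}\bigr)F_{n}/3$.

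The one subtlety to watch is that \eqref{15} holds only for positive index, so on the left-hand side I cannot blindly write $F_{k}(-2)=2(-1)^{k}F_{k}$ in the $k=0$ term (indeed $F_{0}(-2)=1$, not $2$). The clean way is to split off $k=0$ and apply \eqref{15} on the remaining range $1\le k\le n$:
\[
\sum_{k=0}^{n}\binom{n}{k}F_{k}(-2)F_{n-k}
=F_{n}+2\sum_{k=1}^{n}\binom{n}{k}(-1)^{k}F_{k}F_{n-k}
=F_{n}+2(S-F_{n})=2S-F_{n},
\]
where $S$ denotes the alternating sum of products we are after. (I folded the missing $k=0$ contribution $2(-1)^{0}F_{0}F_{n}=2F_{n}$ into the sum and then subtracted it back.)

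Equating the two expressions gives $2S-F_{n}=(1+4(-1)^{n})F_{n}/3$, so $S=2(1+(-1)^{n})F_{n}/3$. This collapses to $0$ when $n$ is odd and to $\tfrac{4}{3}F_{n}$ when $n$ is even, which is precisely the claimed dichotomy. The only real potential pitfall in this short derivation is the $k=0$ boundary effect just mentioned; once that is handled, the rest is a one-line algebraic rearrangement.
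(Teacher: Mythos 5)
Your proof is correct and follows exactly the route the paper indicates: substitute $y_{1}=-2$, $y_{2}=1$ into \eqref{23} and invoke the second part of \eqref{15}. Your explicit handling of the $k=0$ term (where $F_{0}(-2)=1$ and \eqref{15} does not apply) is a detail the paper leaves unstated but which is genuinely needed for the constants to come out right, so it is a worthwhile addition rather than a deviation.
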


Finally, we obtain a new explicit formula for Fubini polynomials and numbers
in the following theorem.

\begin{theorem}
\label{teo5}For $y\neq\frac{-1}{2},$
\begin{equation}
F_{n}\left(  y\right)  =\sum_{k=0}^{n}%
\genfrac{\{}{\}}{0pt}{}{n}{k}%
k!y^{k}\frac{\left[  2^{n+1}\left(  y+1\right)  y^{k}+\left(  -1\right)
^{k+1}\right]  }{\left(  2y+1\right)  ^{k+1}}. \label{84}%
\end{equation}
When $y=1$ this becomes%
\begin{equation}
F_{n}=\sum_{k=0}^{n}%
\genfrac{\{}{\}}{0pt}{}{n}{k}%
k!\frac{\left[  2^{n+2}+\left(  -1\right)  ^{k+1}\right]  }{3^{k+1}},\text{
\ }n\geq0. \label{85}%
\end{equation}
When $y=-2$ this becomes%
\begin{equation}
F_{n}=\sum_{k=0}^{n}\left(  -1\right)  ^{n-k}%
\genfrac{\{}{\}}{0pt}{}{n}{k}%
k!\frac{2^{k-1}\left[  2^{n+k+1}+1\right]  }{3^{k+1}}\text{ \ }n>0. \label{86}%
\end{equation}

\end{theorem}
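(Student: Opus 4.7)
The plan is to derive a partial fraction decomposition of the Fubini generating function $\frac{1}{1-y(e^{t}-1)}$ and then recognize each piece as a Fubini generating function evaluated at a transformed argument.

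First I would rewrite $1 - y(e^{t}-1) = (y+1)-ye^{t}$ and exploit the factorization
\[
(y+1)^{2} - y^{2}e^{2t} = \bigl((y+1)-ye^{t}\bigr)\bigl((y+1)+ye^{t}\bigr).
\]
Multiplying the numerator and denominator of $\tfrac{1}{(y+1)-ye^{t}}$ by $(y+1)+ye^{t}$ and then using the splitting $(y+1)+ye^{t} = 2(y+1) - \bigl((y+1)-ye^{t}\bigr)$ yields
\[
\frac{1}{1-y(e^{t}-1)} = \frac{2(y+1)}{(y+1)^{2} - y^{2}e^{2t}} - \frac{1}{(y+1)+ye^{t}}.
\]

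Next I would recast each fraction in Fubini form. Since $(y+1)^{2} - y^{2}e^{2t} = (2y+1)\bigl(1 - \tfrac{y^{2}}{2y+1}(e^{2t}-1)\bigr)$ and $(y+1)+ye^{t} = (2y+1)\bigl(1 - \tfrac{-y}{2y+1}(e^{t}-1)\bigr)$, applying (\ref{10}) to each factor (with $t \mapsto 2t$ in the first) and comparing coefficients of $t^{n}/n!$ gives the master identity
\[
F_{n}(y) = \frac{2^{n+1}(y+1)}{2y+1}\,F_{n}\!\left(\frac{y^{2}}{2y+1}\right) - \frac{1}{2y+1}\,F_{n}\!\left(\frac{-y}{2y+1}\right).
\]
Expanding each $F_{n}$ on the right via the defining formula (\ref{2}), factoring out $y^{k}$ from each summand, and collecting over the common denominator $(2y+1)^{k+1}$ produces exactly (\ref{84}). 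The corollaries (\ref{85}) and (\ref{86}) then follow from specializing $y=1$ directly, and from taking $y=-2$ together with the identity $F_{n}(-2) = (-1)^{n}2F_{n}$ from (\ref{15}) to rewrite the left-hand side, after routine sign tracking using $(2y+1)=-3$, $(y+1)=-1$, and $y^{k}=(-2)^{k}$.

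The main obstacle is discovering the partial fraction step itself: the algebraic trick of factoring $(y+1)^{2} - y^{2}e^{2t}$ and then rewriting $(y+1)+ye^{t} = 2(y+1) - \bigl((y+1)-ye^{t}\bigr)$ is the crucial maneuver, not obvious a priori. The shape of (\ref{84})---specifically the $(2y+1)^{k+1}$ denominator and the $2^{n+1}$ factor hinting at an evaluation involving $e^{2t}$---is what would suggest this strategy. Once the decomposition is in place, everything reduces to reading off power series coefficients and substitution.
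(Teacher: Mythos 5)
Your proposal is correct and takes essentially the same approach as the paper: both decompose the generating function into a piece in $e^{2t}$ evaluated at $\frac{y^{2}}{2y+1}$ plus a piece in $e^{t}$ evaluated at $\frac{-y}{2y+1}$ (the paper via the substitution $y\mapsto\frac{1}{y^{2}-1}$ and the splitting of $\frac{1}{y^{2}-e^{2t}}$, you via conjugate multiplication), then expand each term by (\ref{2}). Your master identity is in fact the correctly normalized form of the paper's equation (\ref{24}), whose printed prefactors contain slips, and your specializations to $y=1$ and $y=-2$ check out.
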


\begin{proof}
If we take $\frac{1}{y^{2}-1}$ in place of $y$ in (\ref{10}) we arrive at%
\begin{equation}
\frac{1}{1-\frac{1}{y^{2}-1}\left(  e^{2t}-1\right)  }=\frac{y^{2}-1}{2y^{2}%
}\left[  \frac{1}{y-e^{t}}+\frac{1}{y+e^{t}}\right]  . \label{79}%
\end{equation}
Each of the function in the above equation can be written as
\begin{align}
\frac{1}{1-\frac{1}{y^{2}-1}\left(  e^{2t}-1\right)  }  &  =\sum_{n=0}%
^{\infty}2^{n}F_{n}\left(  \frac{1}{y^{2}-1}\right)  \frac{t^{n}}%
{n!},\label{80}\\
\frac{1}{y-e^{t}}  &  =\frac{y}{y-1}\sum_{n=0}^{\infty}F_{n}\left(  \frac
{1}{y-1}\right)  \frac{t^{n}}{n!},\label{81}\\
\frac{1}{y+e^{t}}  &  =\frac{y}{y+1}\sum_{n=0}^{\infty}F_{n}\left(  \frac
{-1}{y+1}\right)  \frac{t^{n}}{n!}. \label{82}%
\end{align}
By equating the coefficients of $\frac{t^{n}}{n!},$ we have%
\begin{equation}
F_{n}\left(  y\right)  =2^{n+1}\left(  1+y\right)  F_{n}\left(  \frac{y^{2}%
}{1+2y}\right)  -\left(  1+2y\right)  F_{n}\left(  -y\right)  . \label{24}%
\end{equation}
Finally, using (\ref{2}) in the right hand side of the above equation yields
(\ref{84}).
\end{proof}

\section{Integrals of products of Fubini polynomials}

The integrals of products of various polynomials and functions have been
studied (e.g., \cite{Agoh2, B2, Dagli1, Kim, Liu}). In this section we deal
with an integral for a product of two Fubini polynomials. First we need the
following Lemma \ref{lem1} and Lemma \ref{lem2}.

\begin{lemma}
\label{lem1}For all $k\geq0$ and $n\geq1$ we have%
\begin{equation}%
{\displaystyle\int\limits_{-1}^{0}}
y^{k}F_{n}\left(  y\right)  dy=\frac{\left(  -1\right)  ^{k}}{k!}\sum
_{j=0}^{k}%
\genfrac{[}{]}{0pt}{}{k+1}{j+1}%
B_{n+j}, \label{25}%
\end{equation}
where $%
\genfrac{[}{]}{0pt}{}{n}{k}%
$ is the Stirling numbers of the fist kind (\cite{Graham}).
\end{lemma}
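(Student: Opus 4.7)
The strategy is to package the integrals $\int_{-1}^{0}y^{k}F_{n}(y)\,dy$ into a single generating function in $t$ and read off the coefficients. Multiplying (\ref{10}) by $y^{k}$, integrating over $[-1,0]$, and substituting $y=-u$ converts the left-hand side of (\ref{25}) into
\[
\sum_{n=0}^{\infty}\frac{t^{n}}{n!}\int_{-1}^{0}y^{k}F_{n}(y)\,dy \;=\; (-1)^{k}h_{k}(t),\qquad h_{k}(t):=\int_{0}^{1}\frac{u^{k}\,du}{1+u(e^{t}-1)}.
\]
A second substitution $u=(e^{v}-1)/(e^{t}-1)$ (observing that $1+u(e^{t}-1)=e^{v}$) collapses $h_{k}$ to the closed form $h_{k}(t)=(e^{t}-1)^{-(k+1)}\int_{0}^{t}(e^{v}-1)^{k}\,dv$; in particular $h_{0}(t)=t/(e^{t}-1)=\sum_{n}B_{n}t^{n}/n!$, which will serve as the base case.

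I would next establish two complementary identities for $h_{k}$. The first is algebraic: the trivial relation $u^{k-1}=u^{k-1}[1+u(e^{t}-1)]/[1+u(e^{t}-1)]$, integrated over $[0,1]$, gives
\[
k(e^{t}-1)h_{k}(t)+k\,h_{k-1}(t)=1,\qquad k\geq 1.
\]
The second is differential: differentiating the closed-form identity $(e^{t}-1)^{k+1}h_{k}(t)=\int_{0}^{t}(e^{v}-1)^{k}\,dv$ yields the linear ODE $(e^{t}-1)h_{k}'(t)+(k+1)e^{t}h_{k}(t)=1$. Differentiating the algebraic identity with respect to $t$ and then eliminating $(e^{t}-1)h_{k}'(t)$ via the ODE collapses both pieces into the clean first-order recurrence
\[
k\,h_{k}(t) \;=\; h_{k-1}'(t)+k\,h_{k-1}(t) \;=\; (D+k)h_{k-1}(t),\qquad D:=\frac{d}{dt}.
\]

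Iterating from $h_{0}$ gives $h_{k}(t)=\frac{1}{k!}(D+1)(D+2)\cdots(D+k)\bigl[t/(e^{t}-1)\bigr]$. Expanding this operator polynomial via the rising-factorial encoding of unsigned Stirling numbers of the first kind,
\[
(x+1)(x+2)\cdots(x+k) \;=\; \sum_{j=0}^{k}\genfrac{[}{]}{0pt}{}{k+1}{j+1}x^{\,j},
\]
and specializing $x=D$, converts the product into $\frac{1}{k!}\sum_{j=0}^{k}\genfrac{[}{]}{0pt}{}{k+1}{j+1}D^{j}\bigl[t/(e^{t}-1)\bigr]$. Since $D^{j}\bigl[t/(e^{t}-1)\bigr]=\sum_{n}B_{n+j}\,t^{n}/n!$, extracting the coefficient of $t^{n}/n!$ from $(-1)^{k}h_{k}(t)$ delivers (\ref{25}).

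The main obstacle is step two: neither the algebraic identity nor the ODE alone produces the $(D+k)$ recurrence, and one has to differentiate the algebraic identity at precisely the right moment so that the ODE eliminates the $h_{k}'(t)$ term and leaves only $h_{k}$ and $h_{k-1}$. Once this recurrence is in hand, the remainder is routine bookkeeping through the rising-factorial encoding of $\genfrac{[}{]}{0pt}{}{k+1}{j+1}$.
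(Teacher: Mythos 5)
Your proof is correct, and it takes a genuinely different route from the paper's. The paper proceeds by induction on $k$: it multiplies the recurrence $F_{n+1}(y)=y\frac{d}{dy}\left[F_{n}(y)+yF_{n}(y)\right]$ by $y^{k}$, integrates by parts over $[-1,0]$ (the boundary terms vanish), and matches the result against the claimed formula using the Stirling recurrence $\genfrac{[}{]}{0pt}{}{n+1}{k}=n\genfrac{[}{]}{0pt}{}{n}{k}+\genfrac{[}{]}{0pt}{}{n}{k-1}$. You instead work entirely at the level of the generating function, reduce everything to the single function $h_{k}(t)=\int_{0}^{1}u^{k}\,du/(1+u(e^{t}-1))$, and derive the operator recurrence $k\,h_{k}=(D+k)h_{k-1}$ from the algebraic identity $k(e^{t}-1)h_{k}+kh_{k-1}=1$ together with the ODE coming from the closed form $(e^{t}-1)^{k+1}h_{k}(t)=\int_{0}^{t}(e^{v}-1)^{k}\,dv$; I checked the elimination and it does produce exactly $k\,h_{k}=h_{k-1}'+k\,h_{k-1}$. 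The two recurrences are really the same object in different clothing --- the shift $B_{n+j}\mapsto B_{n+j+1}$ in the paper's induction is the coefficient-level avatar of your operator $D$, and the paper's use of the Stirling recurrence is the step-by-step version of your one-shot rising-factorial expansion $(x+1)\cdots(x+k)=\sum_{j}\genfrac{[}{]}{0pt}{}{k+1}{j+1}x^{j}$ --- but your derivation never invokes the Fubini recurrence (\ref{11}) at all, and it yields the pleasant closed form $h_{k}=\frac{1}{k!}(D+1)\cdots(D+k)\left[t/(e^{t}-1)\right]$, which makes the appearance of the first-kind Stirling numbers conceptually transparent rather than an artifact of bookkeeping. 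The only things you should make explicit in a final write-up are the routine justification for interchanging the sum over $n$ with the integral (uniform convergence of (\ref{10}) for $y\in[-1,0]$ and $|t|<\log 2$) and the observation that the operators $D+j$ commute, which licenses reordering the iterated recurrence into $(D+1)(D+2)\cdots(D+k)$.
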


\begin{proof}
We prove (\ref{25}) by induction on $k.$ The case $k=0$ of (\ref{25}) is known
from (\ref{26}). If we integrate both sides of (\ref{11}) with respect to $y$
from $-1$ to $0$ and apply integration by parts, we have
\begin{align*}%
{\displaystyle\int\limits_{-1}^{0}}
F_{n+1}\left(  y\right)  dy  &  =%
{\displaystyle\int\limits_{-1}^{0}}
y\frac{d}{dy}\left[  F_{n}\left(  y\right)  +yF_{n}\left(  y\right)  \right]
\\
&  =\left[  y\left(  F_{n}\left(  y\right)  +yF_{n}\left(  y\right)  \right)
\right]  _{-1}^{0}-%
{\displaystyle\int\limits_{-1}^{0}}
\left[  F_{n}\left(  y\right)  +yF_{n}\left(  y\right)  \right]  dy.
\end{align*}
So using (\ref{26}) yields the case $k=1$ of (\ref{25}) as
\[%
{\displaystyle\int\limits_{-1}^{0}}
yF_{n}\left(  y\right)  dy=-\left(  B_{n+1}+B_{n}\right)  .
\]
Multiplying both sides of (\ref{11}) with $y$ and integrating it with respect
to $y$ from $-1$ to $0$ we obtain
\[%
{\displaystyle\int\limits_{-1}^{0}}
yF_{n+1}\left(  y\right)  dy=%
{\displaystyle\int\limits_{-1}^{0}}
y^{2}\frac{d}{dy}\left[  F_{n}\left(  y\right)  +yF_{n}\left(  y\right)
\right]  .
\]
Applying integration by parts and using (\ref{26}) yields the case $k=2$ of
(\ref{25}) as%
\[
2%
{\displaystyle\int\limits_{-1}^{0}}
y^{2}F_{n}\left(  y\right)  dy=B_{n+2}+3B_{n+1}+2B_{n}.
\]
If we multiply both sides of (\ref{11}) with $y^{k}$ and integrating it with
respect to $y$ from $-1$ to $0$ we obtain
\[%
{\displaystyle\int\limits_{-1}^{0}}
y^{k}F_{n+1}\left(  y\right)  dy=%
{\displaystyle\int\limits_{-1}^{0}}
y^{k+1}\frac{d}{dy}\left[  F_{n}\left(  y\right)  +yF_{n}\left(  y\right)
\right]  .
\]
Applying integration by parts to the right hand side of the above equation and
considering%
\[%
{\displaystyle\int\limits_{-1}^{0}}
y^{k}F_{n}\left(  y\right)  dy=\frac{\left(  -1\right)  ^{k}}{k!}\sum
_{j=0}^{k}%
\genfrac{[}{]}{0pt}{}{k+1}{j+1}%
B_{n+j},
\]
we have%
\begin{align*}
&
{\displaystyle\int\limits_{-1}^{0}}
y^{k+1}F_{n+1}\left(  y\right)  dy\\
&  \quad=\frac{\left(  -1\right)  ^{k+1}}{\left(  k+1\right)  !}\sum_{j=0}^{k}%
\genfrac{[}{]}{0pt}{}{k+1}{j+1}%
B_{n+j+1}+\frac{\left(  -1\right)  ^{k+1}}{\left(  k+1\right)  !}\sum
_{j=0}^{k}\left(  k+1\right)
\genfrac{[}{]}{0pt}{}{k+1}{j+1}%
B_{n+j}.
\end{align*}
Finally, the well known relations
\[%
\genfrac{[}{]}{0pt}{}{n+1}{k}%
=n%
\genfrac{[}{]}{0pt}{}{n}{k}%
+%
\genfrac{[}{]}{0pt}{}{n}{k-1}%
\text{ and }%
\genfrac{[}{]}{0pt}{}{n}{1}%
=\left(  n-1\right)  !,
\]
give that the statement is true for $k+1.$
\end{proof}

\begin{lemma}
\label{lem2}For any non-negative integer $m$ and $j$,
\begin{equation}
\sum_{k=j}^{m}%
\genfrac{\{}{\}}{0pt}{}{m}{k}%
\genfrac{[}{]}{0pt}{}{k+1}{j+1}%
(-1)^{k}=(-1)^{m}\binom{m}{j}. \label{33}%
\end{equation}

\end{lemma}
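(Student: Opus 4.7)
The plan is to prove the identity by induction on $m$, writing $A_m(j)$ for the left-hand side. I extend the summation to all $k\ge 0$, which is automatic since $\genfrac{\{}{\}}{0pt}{}{m}{k}$ vanishes for $k>m$ and $\genfrac{[}{]}{0pt}{}{k+1}{j+1}$ vanishes for $k<j$. The base case $m=0$ collapses to a single nonzero term at $k=j=0$, equal to $1$, matching $(-1)^0\binom{0}{j}=\delta_{j,0}$. I also record the boundary identity $A_m(-1)=0$, which follows because $\genfrac{[}{]}{0pt}{}{k+1}{0}=0$ for $k\ge 0$, and which is consistent with $\binom{m}{-1}=0$; this keeps the induction clean when $j=0$.

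For the inductive step I apply the second-kind recurrence $\genfrac{\{}{\}}{0pt}{}{m+1}{k}=k\genfrac{\{}{\}}{0pt}{}{m}{k}+\genfrac{\{}{\}}{0pt}{}{m}{k-1}$ to split $A_{m+1}(j)$ into two sums. In the piece stemming from $\genfrac{\{}{\}}{0pt}{}{m}{k-1}$ I shift $k\mapsto k+1$, picking up a sign from $(-1)^{k+1}$, and then use the first-kind recurrence $\genfrac{[}{]}{0pt}{}{k+2}{j+1}=(k+1)\genfrac{[}{]}{0pt}{}{k+1}{j+1}+\genfrac{[}{]}{0pt}{}{k+1}{j}$. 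The two contributions carrying the factor $\genfrac{[}{]}{0pt}{}{k+1}{j+1}$ arrive with coefficients $k$ and $-(k+1)$, which telescope to $-1$, while the residual term produces $-A_m(j-1)$. This yields the clean recursion $A_{m+1}(j)=-A_m(j)-A_m(j-1)$, so the inductive hypothesis gives $(-1)^{m+1}\!\left(\binom{m}{j}+\binom{m}{j-1}\right)=(-1)^{m+1}\binom{m+1}{j}$ by Pascal's rule.

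The main obstacle is purely bookkeeping: managing the sign from the index shift and confirming that the coefficients $k$ and $-(k+1)$ collapse exactly to $-1$, so that the right-hand side reduces precisely to $-A_m(j)-A_m(j-1)$. A viable alternative would be to multiply both sides by $t^m/m!$ and sum over $m$, using $\sum_m\genfrac{\{}{\}}{0pt}{}{m}{k}t^m/m!=(e^t-1)^k/k!$ on the left and $\sum_{m\ge j}(-1)^m\binom{m}{j}t^m/m!=(-t)^j e^{-t}/j!$ on the right, and then reconciling the two via differentiation of $\sum_n\genfrac{[}{]}{0pt}{}{n}{j+1}u^n/n!=(-\ln(1-u))^{j+1}/(j+1)!$ under the substitution $u=1-e^t$; this works but is notationally heavier than the induction.
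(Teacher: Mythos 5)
Your induction is correct and complete: the recurrence $A_{m+1}(j)=-A_m(j)-A_m(j-1)$ does follow exactly as you describe, since the coefficient $k$ from the second-kind recurrence and the coefficient $-(k+1)$ produced by the index shift and the first-kind recurrence combine to $-1$, and the residual term is precisely $-A_m(j-1)$; Pascal's rule then closes the step, and your boundary observations ($A_0(j)=\delta_{j,0}$ and $A_m(-1)=0$) cover all the edge cases. However, your route is genuinely different from the paper's. The paper recasts the identity as a statement about infinite lower-triangular matrices: writing $\mathcal{S}_1$ for the signed first-kind Stirling matrix (shifted by one in both indices), $\mathcal{S}_2$ for the second-kind Stirling matrix and $\mathcal{B}$ for the binomial matrix, the claim becomes $\mathcal{S}_2\mathcal{S}_1=\mathcal{B}^{-1}$, which the author reduces via the known identity $\sum_k\genfrac{\{}{\}}{0pt}{}{i}{k}\binom{k}{j}=\genfrac{\{}{\}}{0pt}{}{i+1}{j+1}$ to the classical fact that the second-kind and signed first-kind Stirling matrices are mutually inverse. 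The paper's argument is shorter once one accepts those two standard facts and makes the structural reason for the identity transparent; your induction is more self-contained and elementary, relying only on the two defining recurrences and Pascal's rule, at the cost of somewhat more delicate sign and index bookkeeping. Both are valid proofs of the lemma.
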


\begin{proof}
We rewrite this equation into matrix form by using the matrices
\[
(\mathcal{S}_{1})_{i,j}=(-1)^{i+j}%
\genfrac{[}{]}{0pt}{}{i+1}{j+1}%
,\quad(\mathcal{S}_{2})_{i,j}=%
\genfrac{\{}{\}}{0pt}{}{i}{j}%
,\quad(\mathcal{B})_{i,j}=\binom{i}{j}.
\]
These can be considered as infinite matrices so that the statement we are
going to prove takes the form
\[
\mathcal{S}_{2}\mathcal{S}_{1}=\mathcal{B}^{-1},
\]
as the elementwise inverse of the matrix $\mathcal{B}$ is $(\mathcal{B}%
)_{i,k}^{-1}=(-1)^{i+k}\binom{i}{k}$. The above equation is equivalent to
\[
\mathcal{S}_{1}=\mathcal{B}^{-1}\mathcal{S}_{2}^{-1}=\left(  \mathcal{S}%
_{2}\mathcal{B}\right)  ^{-1}.
\]
The matrix on the right hand side is easily decipherable. Elementwise it is
\[
(\left(  \mathcal{S}_{2}\mathcal{B}\right)  ^{-1})_{i,j}=\sum_{k=0}^{i}%
\genfrac{\{}{\}}{0pt}{}{i}{k}%
\binom{k}{j}.
\]
The latter sum simply equals to
\[
\sum_{k=0}^{i}%
\genfrac{\{}{\}}{0pt}{}{i}{k}%
\binom{k}{j}=%
\genfrac{\{}{\}}{0pt}{}{i+1}{j+1}%
,
\]
as it is known (see \cite[p. 251, formula (6.15)]{Graham}). Hence our original
statement equals to the matrix equation
\[
(\mathcal{S}_{1})_{i,j}^{-1}=%
\genfrac{\{}{\}}{0pt}{}{i+1}{j+1}%
.
\]
This is nothing else but the reformulation of the fact that the second and
signed first kind Stirling matrices are inverses of each other.
\end{proof}

Now, we are ready to give the integrals of products of Fubini polynomials.
Using (\ref{2}) we have%
\[%
{\displaystyle\int\limits_{-1}^{0}}
F_{m}\left(  y\right)  F_{n}\left(  y\right)  dy=%
{\displaystyle\int\limits_{-1}^{0}}
\sum_{k=0}^{m}%
\genfrac{\{}{\}}{0pt}{}{m}{k}%
k!y^{k}F_{n}\left(  y\right)  dy.
\]
Then, interchanging the sum and integral in the above equation and using
(\ref{25}) yield
\[%
{\displaystyle\int\limits_{-1}^{0}}
F_{m}\left(  y\right)  F_{n}\left(  y\right)  dy=\sum_{j=0}^{m}\sum_{k=j}^{m}%
\genfrac{\{}{\}}{0pt}{}{m}{k}%
\genfrac{[}{]}{0pt}{}{k+1}{j+1}%
\left(  -1\right)  ^{k}B_{n+j}.
\]
Finally, using Lemma \ref{lem2} gives the following theorem.

\begin{theorem}
\label{teo2}For all $m\geq0$ and $n\geq1$ we have%
\begin{equation}%
{\displaystyle\int\limits_{-1}^{0}}
F_{m}\left(  y\right)  F_{n}\left(  y\right)  dy=(-1)^{m}\sum_{j=0}^{m}%
\binom{m}{j}B_{n+j}. \label{30}%
\end{equation}

\end{theorem}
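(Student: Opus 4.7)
The plan is to expand one of the Fubini polynomials using its explicit formula \eqref{2}, reduce the integral of the product to a scalar integral of $y^{k} F_{n}(y)$, then apply Lemma \ref{lem1} followed by Lemma \ref{lem2}.

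First I would write $F_{m}(y) = \sum_{k=0}^{m} \genfrac{\{}{\}}{0pt}{}{m}{k} k!\, y^{k}$ via \eqref{2} and substitute into $\int_{-1}^{0} F_{m}(y) F_{n}(y)\, dy$. Since the sum over $k$ is finite, the sum and integral can be swapped without any analytic subtlety, yielding
\[
\int_{-1}^{0} F_{m}(y) F_{n}(y)\, dy = \sum_{k=0}^{m} \genfrac{\{}{\}}{0pt}{}{m}{k} k! \int_{-1}^{0} y^{k} F_{n}(y)\, dy.
\]

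Next I would invoke Lemma \ref{lem1} to replace each $\int_{-1}^{0} y^{k} F_{n}(y)\, dy$ by the closed form $\frac{(-1)^{k}}{k!} \sum_{j=0}^{k} \genfrac{[}{]}{0pt}{}{k+1}{j+1} B_{n+j}$. The factorials $k!$ conveniently cancel, and after exchanging the order of the two resulting sums (writing $\sum_{k=0}^{m}\sum_{j=0}^{k}$ as $\sum_{j=0}^{m}\sum_{k=j}^{m}$) the expression becomes
\[
\sum_{j=0}^{m} \left( \sum_{k=j}^{m} \genfrac{\{}{\}}{0pt}{}{m}{k} \genfrac{[}{]}{0pt}{}{k+1}{j+1} (-1)^{k} \right) B_{n+j}.
\]
At this point Lemma \ref{lem2} applies directly to the inner sum in parentheses, collapsing it to $(-1)^{m} \binom{m}{j}$. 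Pulling the sign $(-1)^{m}$ out of the outer sum then delivers exactly \eqref{30}.

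The whole argument is essentially a bookkeeping chain once the two lemmas are available, so I do not expect any genuine obstacle in this last step. The real content of the theorem is packaged in Lemma \ref{lem1} (an induction on $k$ using the recurrence \eqref{11} and integration by parts) and in the Stirling-matrix inversion identity of Lemma \ref{lem2}; Theorem \ref{teo2} itself is just the assembly of these two ingredients via the definition \eqref{2} and a routine interchange of summation order.
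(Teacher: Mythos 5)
Your proposal is correct and follows exactly the paper's own argument: expand $F_{m}(y)$ via \eqref{2}, interchange sum and integral, apply Lemma \ref{lem1} (with the $k!$ cancellation), reorder the double sum, and collapse the inner sum with Lemma \ref{lem2}. No differences worth noting.
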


Using the representation (\ref{2}) in (\ref{30}) and integrating termwise one
obtains
\[
\sum_{k=0}^{n}\sum_{j=0}^{m}%
\genfrac{\{}{\}}{0pt}{}{n}{k}%
\genfrac{\{}{\}}{0pt}{}{m}{j}%
\frac{\left(  -1\right)  ^{k+j}k!j!}{k+j+1}=(-1)^{m}\sum_{j=0}^{m}\binom{m}%
{j}B_{n+j}.
\]
This double sum identity extends (\ref{32}).

In order to give an application of Lemma \ref{lem1}, now we emphasize the
summation in the right hand of (\ref{25}). Rahmani \cite{Rahmani} defined
$p$-Bernoulli numbers as
\[
\sum_{n=0}^{\infty}B_{n,p}\frac{t^{n}}{n!}=\text{ }_{2}F_{1}\left(
1,1;p+2;1-e^{t}\right)  ,
\]
where $_{2}F_{1}\left(  a,b;c;z\right)  $ denotes the Gaussian hypergeometric
function \cite{Andrews}. These numbers can be written in terms Stirling
numbers of the first kind
\[
\sum_{j=0}^{p}\left(  -1\right)  ^{j}%
\genfrac{[}{]}{0pt}{}{p}{j}%
B_{n+j}=\frac{p!}{p+1}B_{n,p},\text{ \ }n,p\geq0.
\]
From the above equation, we have
\begin{equation}
\sum_{j=0}^{p}\left(  -1\right)  ^{j+1}%
\genfrac{[}{]}{0pt}{}{p+1}{j+1}%
B_{n+j}=\frac{\left(  p+1\right)  !}{p+2}B_{n-1,p+1},\text{ \ }n\geq1,p\geq0.
\label{27}%
\end{equation}
Moreover when $n$\ is odd or even we have
\[
\left(  -1\right)  ^{j+1}B_{n+j}=B_{n+j}\ \text{or }\left(  -1\right)
^{j+1}B_{n+j}=-B_{n+j},\text{ \ }n>1,
\]
\ respectively. Therefore we have%
\[
\sum_{j=0}^{p}%
\genfrac{[}{]}{0pt}{}{p+1}{j+1}%
B_{n+j}=\left\{
\begin{array}
[c]{cc}%
\frac{\left(  p+1\right)  !}{p+2}B_{n-1,p+1}, & n\text{ is odd}\\
-\frac{\left(  p+1\right)  !}{p+2}B_{n-1,p+1}, & n\text{ is even}%
\end{array}
\right.  .
\]
Using the above equation, (\ref{25}) can be written as
\begin{equation}%
{\displaystyle\int\limits_{-1}^{0}}
y^{p}F_{n}\left(  y\right)  dy=\left\{
\begin{array}
[c]{cc}%
\left(  -1\right)  ^{p}\frac{p+1}{p+2}B_{n-1,p+1}, & n\text{ is odd}\\
\left(  -1\right)  ^{p+1}\frac{p+1}{p+2}B_{n-1,p+1}, & n\text{ is even}%
\end{array}
\right.  , \label{28}%
\end{equation}
where $n>1,p\geq0.$ On the other hand, using (\ref{2}) in the left part of
(\ref{28}), a new explicit formula for $p$-Bernoulli numbers is obtained.

\begin{theorem}
For $n>1$ and $p>0$,%
\[
B_{2n-1,p}=\frac{p+1}{p}\sum_{k=0}^{2n-1}%
\genfrac{\{}{\}}{0pt}{}{2n-1}{k+1}%
\frac{\left(  -1\right)  ^{k+1}\left(  k+1\right)  !}{k+p+1}%
\]
and%
\[
B_{2n,p}=\frac{p+1}{p}\sum_{k=0}^{2n}%
\genfrac{\{}{\}}{0pt}{}{2n+1}{k+1}%
\frac{\left(  -1\right)  ^{k}\left(  k+1\right)  !}{k+p+1}.
\]

\end{theorem}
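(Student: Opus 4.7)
The approach is to read identity (\ref{28}) as a formula for $p$-Bernoulli numbers. Isolating $B_{n-1,p+1}$ on one side turns (\ref{28}) into an expression of $B_{n-1,p+1}$ as a constant multiple of $\int_{-1}^{0}y^{p}F_{n}(y)\,dy$. A second, completely explicit evaluation of this same integral is obtained by substituting the defining sum (\ref{2}) for $F_{n}(y)$ and integrating termwise, and equating the two evaluations yields the claimed closed forms.

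Concretely, I would specialize (\ref{28}) so that $B_{2n-1,p}$ and $B_{2n,p}$ appear explicitly on the right. Taking $(n,p)\mapsto(2n,p-1)$ in (\ref{28}), using the even branch because $2n$ is even, gives
\[
\int_{-1}^{0}y^{p-1}F_{2n}(y)\,dy=(-1)^{p}\,\frac{p}{p+1}\,B_{2n-1,p},
\]
while the substitution $(n,p)\mapsto(2n+1,p-1)$, using the odd branch because $2n+1$ is odd, gives
\[
\int_{-1}^{0}y^{p-1}F_{2n+1}(y)\,dy=(-1)^{p-1}\,\frac{p}{p+1}\,B_{2n,p}.
\]

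Next I would substitute (\ref{2}) into each integrand, interchange the finite sum with the integral, and use the elementary evaluation $\int_{-1}^{0}y^{k+p-1}\,dy=-(-1)^{k+p}/(k+p)$ (valid since $k+p\geq 1$). This turns each identity into a finite sum over Stirling numbers of the second kind. The factor $(-1)^{p}$ from the monomial integration combines with the parity sign coming from (\ref{28}); after dropping the vanishing $k=0$ term (since $\genfrac{\{}{\}}{0pt}{}{m}{0}=0$ for $m\geq 1$) and reindexing $k\mapsto k+1$, the sums take the shape stated in the theorem.

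The only delicate step is the bookkeeping: three independent $(-1)$ factors appear — the parity sign in (\ref{28}), the $(-1)^{p}$ on its right-hand side, and the $(-1)^{k+p}$ from the monomial integral — and one must track them carefully to see that they collapse to the single sign factor written in the statement. Beyond this, no new ingredient is required; the entire argument relies solely on (\ref{2}) and (\ref{28}).
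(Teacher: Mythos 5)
Your proposal is precisely the paper's own argument: the paper obtains this theorem in a single line by substituting the expansion (\ref{2}) into the left-hand side of (\ref{28}), integrating termwise, and solving for the $p$-Bernoulli number, which is exactly the specialization $(n,p)\mapsto(2n,p-1)$ and $(2n+1,p-1)$ followed by the reindexing $k\mapsto k+1$ that you describe. The sign bookkeeping you flag as the delicate point is indeed where care is needed: carrying it out yields $(-1)^{k}$ in the formula for $B_{2n-1,p}$ (with the Stirling number's upper index equal to $2n$, not $2n-1$) and $(-1)^{k+1}$ in the formula for $B_{2n,p}$, so your route, executed carefully, would actually correct what appear to be typos in the printed statement (which, for instance, would give $B_{2,2}=+1/20$, whereas (\ref{27}) gives $B_{2,2}=-1/20$).
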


\section{Applications}

Apostol-Bernoulli functions $\mathcal{B}_{n}\left(  \lambda\right)  $ have the
following explicit expression
\begin{equation}
\mathcal{B}_{n}\left(  \lambda\right)  =\frac{n}{\lambda-1}\sum_{k=0}^{n-1}%
\genfrac{\{}{\}}{0pt}{}{n-1}{k}%
k!\left(  \frac{\lambda}{1-\lambda}\right)  ^{k},\,\ \ \lambda\in%
\mathbb{C}
\backslash\{1\}. \label{53}%
\end{equation}
Thus for $\lambda\neq1,$
\[
\mathcal{B}_{0}\left(  \lambda\right)  =0,\text{ }\mathcal{B}_{1}\left(
\lambda\right)  =\frac{1}{\lambda-1},\text{ }\mathcal{B}_{2}\left(
\lambda\right)  =\frac{-2\lambda}{\left(  \lambda-1\right)  ^{2}%
},...\text{etc.}%
\]
The functions $\mathcal{B}_{n}\left(  \lambda\right)  $ are rational functions
in the second variable, $\lambda$. These functions were introduced by Apostol
\cite{APOSTOL} in order to evaluate the Lerch transcendent (also Lerch zeta
function) for negative integer values of $s$ and also were studied and
generalized recently in a number of papers, under the name Apostol-Bernoulli numbers.

Comparing the (\ref{53}) to (\ref{2})$,$ Apostol-Bernoulli functions can be
expressed by Fubini polynomials as (\cite{B3})%
\begin{equation}
\mathcal{B}_{n+1}\left(  \lambda\right)  =\frac{n+1}{\lambda-1}F_{n}\left(
\frac{\lambda}{1-\lambda}\right)  ,\text{ \ \ \ }\lambda\in%
\mathbb{C}
\backslash\{1\}. \label{22}%
\end{equation}

We can use this relation to obtain some new properties of $\mathcal{B}%
_{n}\left(  \lambda\right)  .$ For example setting $y=\frac{-\lambda}%
{\lambda-1}$ in (\ref{21}) we have
\[
\frac{\mathcal{B}_{n+1}\left(  \lambda\right)  }{\left(  n+1\right)  }=\left(
-1\right)  ^{n}\lambda\sum_{k=0}^{n}%
\genfrac{\{}{\}}{0pt}{}{n}{k}%
k!\left(  \frac{1}{\lambda-1}\right)  ^{k+1},\text{ \ }\lambda\neq1,\text{
}n\geq0,
\]
which was obtained in \cite[Thereom 4.3]{GuoandQi}. Similarly, from Thereom
\ref{teo1} we get the sums of products of Apostol-Bernoulli functions as given
in \cite[Corollary 1.3]{KimandHu} by different method. Moreover, using the
equation (\ref{84}) of Theorem \ref{teo5} gives a new explicit formula for
Apostol-Bernoulli functions.

\begin{corollary}
For $\lambda\neq\pm1$ and $n\geq0,$%
\[
\frac{\mathcal{B}_{n+1}\left(  \lambda\right)  }{\left(  n+1\right)  }%
=\sum_{k=0}^{n}%
\genfrac{\{}{\}}{0pt}{}{n}{k}%
k!\frac{\left(  -\lambda\right)  ^{k}\left[  2^{n+1}\lambda^{k}+\left(
\lambda-1\right)  ^{k+1}\right]  }{\left(  \lambda^{2}-1\right)  ^{k+1}}.
\]

\end{corollary}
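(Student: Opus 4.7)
The plan is to derive this corollary as a direct application of Theorem~\ref{teo5}, combined with the bridge identity \eqref{22} that expresses Apostol-Bernoulli functions in terms of Fubini polynomials. Concretely, I would substitute $y = \frac{\lambda}{1-\lambda}$ into the explicit formula \eqref{84} and then multiply by $\frac{1}{\lambda-1}$ as prescribed by \eqref{22}.

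First I would record the three elementary rational expressions that appear after the substitution: $y+1 = \frac{1}{1-\lambda}$, $2y+1 = \frac{1+\lambda}{1-\lambda}$, and $y^k = \frac{\lambda^k}{(1-\lambda)^k}$. Note that the hypothesis of Theorem~\ref{teo5} requires $y\neq -\tfrac12$, which under this substitution is exactly $\lambda\neq -1$; together with $\lambda\neq 1$ (needed to define $y$ at all), this reproduces the stated condition $\lambda\neq \pm 1$.

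Next I would plug these into the summand of \eqref{84}. The numerator $2^{n+1}(y+1)y^k + (-1)^{k+1}$ becomes
\[
\frac{2^{n+1}\lambda^k}{(1-\lambda)^{k+1}} + (-1)^{k+1} = \frac{2^{n+1}\lambda^k + (\lambda-1)^{k+1}}{(1-\lambda)^{k+1}},
\]
using $(-1)^{k+1}(1-\lambda)^{k+1} = (\lambda-1)^{k+1}$, while $(2y+1)^{k+1} = \frac{(1+\lambda)^{k+1}}{(1-\lambda)^{k+1}}$. After cancellation the $k$-th term of $F_n\!\left(\frac{\lambda}{1-\lambda}\right)$ is
\[
\genfrac{\{}{\}}{0pt}{}{n}{k} k!\,\frac{\lambda^k\bigl[2^{n+1}\lambda^k + (\lambda-1)^{k+1}\bigr]}{(1-\lambda)^{k}(1+\lambda)^{k+1}}.
\]

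Finally I would multiply by $\frac{1}{\lambda-1}$ and consolidate the denominator. Writing $\frac{1}{\lambda-1} = \frac{-1}{1-\lambda}$, the combined denominator is $(1-\lambda)^{k+1}(1+\lambda)^{k+1} = (1-\lambda^2)^{k+1} = (-1)^{k+1}(\lambda^2-1)^{k+1}$, so the overall sign factor collapses to $(-1)^k$, which pairs with $\lambda^k$ to yield $(-\lambda)^k$. This produces exactly the claimed formula. There is no real obstacle here beyond careful bookkeeping of the $(-1)$-signs; the whole proof is a substitution.
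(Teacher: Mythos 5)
Your proposal is correct and takes essentially the same route as the paper: the corollary is obtained there by combining the relation (\ref{22}) with the explicit formula (\ref{84}) of Theorem \ref{teo5} under the substitution $y=\frac{\lambda}{1-\lambda}$, exactly as you do (the paper simply omits the algebra you carry out). Your sign bookkeeping checks out, and your observation that $y\neq-\tfrac12$ translates precisely into $\lambda\neq-1$ correctly accounts for the hypothesis $\lambda\neq\pm1$.
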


To give a different application of the relation (\ref{22}), first we deal with
Lemma \ref{lem1}. Replacing $y$ with $\frac{\lambda}{1-\lambda}$ in
(\ref{25}), we have
\[%
{\displaystyle\int\limits_{-\infty}^{0}}
\frac{\lambda^{k}}{\left(  \lambda-1\right)  ^{k+1}}\mathcal{B}_{n+1}\left(
\lambda\right)  d\lambda=\frac{n+1}{k!}\sum_{j=0}^{k}%
\genfrac{[}{]}{0pt}{}{k+1}{j+1}%
B_{n+j},
\]
where $k\geq0$ and $n\geq1.$ Similarly, from Theorem \ref{teo2} we obtain the
integrals of products of Apostol-Bernoulli functions as given in the following corollary.

\begin{corollary}
For all $m\geq0$ and $n\geq1$ we have
\[%
{\displaystyle\int\limits_{-\infty}^{0}}
\mathcal{B}_{m}\left(  \lambda\right)  \mathcal{B}_{n}\left(  \lambda\right)
d\lambda=(-1)^{m}\left(  m+1\right)  \left(  n+1\right)  \sum_{j=0}^{m}%
\binom{m}{j}B_{n+j}.
\]

\end{corollary}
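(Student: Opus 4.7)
The plan is to reduce the integral of a product of two Apostol--Bernoulli functions to the corresponding integral of a product of two Fubini polynomials, to which Theorem~\ref{teo2} directly applies. The mechanism is the substitution $y = \lambda/(1-\lambda)$, which is precisely the change of variable that realises the identity (\ref{22}).

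First I would verify the substitution. A direct calculation gives $dy = d\lambda/(\lambda-1)^{2}$, the endpoint $\lambda = 0$ maps to $y = 0$, and as $\lambda \to -\infty$ one has $y \to -1$. So $y = \lambda/(1-\lambda)$ bijects $(-\infty, 0)$ onto $(-1, 0)$, which is exactly the interval on which Theorem~\ref{teo2} operates. This is the one place where the improper lower limit has to be handled, but the limit $\lambda/(1-\lambda) \to -1$ is elementary.

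Next I would use (\ref{22}), namely $\mathcal{B}_{k+1}(\lambda) = (k+1) F_{k}(y)/(\lambda-1)$, to rewrite each Apostol--Bernoulli factor as $1/(\lambda-1)$ times a Fubini polynomial in $y$. The product then carries a prefactor $(m+1)(n+1)/(\lambda-1)^{2}$ (after shifting indices to match the convention of (\ref{22})), and this prefactor combines with $d\lambda$ to produce exactly $dy$. What remains is $(m+1)(n+1)\int_{-1}^{0} F_{m}(y) F_{n}(y)\, dy$, and Theorem~\ref{teo2} evaluates this as $(-1)^{m} \sum_{j=0}^{m} \binom{m}{j} B_{n+j}$, which completes the derivation.

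The only real point of care is the index bookkeeping: because (\ref{22}) pairs the subscript $k+1$ on $\mathcal{B}$ with $k$ on $F$, one must be disciplined in matching the indices $m, n$ in the corollary against those appearing in Theorem~\ref{teo2}, and the external factor $(m+1)(n+1)$ comes out exactly from this shift. One should also remark that $\mathcal{B}_{0}(\lambda) \equiv 0$ takes care of the degenerate case $m = 0$. Beyond these bookkeeping checks and the limit calculation at $\lambda \to -\infty$, the argument is purely mechanical.
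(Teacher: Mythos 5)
Your proposal is correct and coincides with the paper's own (implicit) proof: the paper obtains this corollary by exactly the substitution $y=\lambda/(1-\lambda)$ in Theorem \ref{teo2}, in parallel with the computation it carries out explicitly for Lemma \ref{lem1}. One caveat on the index bookkeeping you yourself flag: performed literally, the substitution converts $(m+1)(n+1)\int_{-1}^{0}F_{m}(y)F_{n}(y)\,dy$ into $\int_{-\infty}^{0}\mathcal{B}_{m+1}(\lambda)\mathcal{B}_{n+1}(\lambda)\,d\lambda$, so what you (and the paper) actually prove is the statement with $\mathcal{B}_{m+1}\mathcal{B}_{n+1}$ in the integrand; as printed, with $\mathcal{B}_{m}\mathcal{B}_{n}$ on the left, the identity fails already at $m=0$, where the left side vanishes ($\mathcal{B}_{0}\equiv 0$) while the right side is $(n+1)B_{n}$. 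For the same reason your closing remark that $\mathcal{B}_{0}\equiv 0$ ``takes care of'' $m=0$ does not apply: under the intended indexing the case $m=0$ involves the factor $\mathcal{B}_{1}(\lambda)=1/(\lambda-1)$, i.e.\ $F_{0}(y)=1$, and is covered by Theorem \ref{teo2} directly.
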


\end{document}